\documentclass[11pt]{amsart}

\usepackage{amssymb,amsmath,color,hyperref}
\usepackage[mathscr]{eucal}

\theoremstyle{plain}
\newtheorem{thm}{Theorem}[section]
\newtheorem{theorem}[thm]{Theorem}
\newtheorem{lem}[thm]{Lemma}

\newtheorem{prop}[thm]{Proposition}

\theoremstyle{definition}

\newtheorem{ex}[thm]{Example}

\theoremstyle{remark}

\newtheorem{remark}{Remark}
\newtheorem*{remark*}{Remark}

\numberwithin{equation}{section}


        \newcommand{\field}[1]{{\mathbb{#1}}}
        \newcommand{\NN}{\field{N}}
        \newcommand{\ZZ}{\field{Z}}
        
        \newcommand{\RR}{\field{R}}

\begin{document}

\title[Eigenvalue distribution in gaps of the essential spectrum]{Eigenvalue distribution in gaps of the essential spectrum of the Bochner-Schr\"odinger operator}

\author[Y. A. Kordyukov]{Yuri A. Kordyukov}
\address{Institute of Mathematics, Ufa Federal Research Centre, Russian Academy of Sciences, 112~Chernyshevsky str., 450008 Ufa, Russia} \email{yurikor@matem.anrb.ru}

%
%
\begin{abstract}
The Bochner-Schr\"odinger operator $$H_{p}=\frac 1p\Delta^{L^p}+V$$ on high tensor powers $L^p$ of a Hermitian line bundle $L$ on a Riemannian manifold $X$ of bounded geometry is studied under the assumption of non-degeneracy of the curvature form of $L$. For large $p$, the spectrum of $H_p$ asymptotically coincides with the union of all local Landau levels of the operator at the points of $X$. Moreover, if the union of the local Landau levels over the complement of a compact subset of $X$ has a gap, then the spectrum of $H_{p}$ in the gap is discrete. The main result of the paper is the trace asymptotics formula associated with these eigenvalues. As a consequence, we obtain a Weyl type asymptotic formula for the eigenvalue counting function.
\end{abstract}


 \maketitle
\section{Preliminaries and main results}
Let $(X,g)$ be a smooth Riemannian manifold of dimension $d$ without boundary, $(L,h^L)$ a Hermitian line bundle on $X$ with a Hermitian connection $\nabla^L$. We suppose that $(X, g)$ is  a manifold of bounded geometry and $L$ has bounded geometry. This means that the curvatures $R^{TX}$ and $R^L$ of the Levi-Civita connection $\nabla^{TX}$ and the connection $\nabla^L$, respectively, and their derivatives of any order are uniformly bounded on $X$ in norm induced by $g$ and $h^L$, and the injectivity radius $r_X$ of $(X, g)$ is positive. One can also introduce an additional twisting by a Hermitian vector bundle $(E,h^E)$  with a Hermitian connection $\nabla^E$ of bounded geometry, but we will omit it for simplicity. 

For any $p\in \NN$, let $L^p:=L^{\otimes p}$ be the $p$th tensor power of $L$ and let
\[
\nabla^{L^p}: {C}^\infty(X,L^p)\to
{C}^\infty(X, T^*X \otimes L^p)
\] 
be the Hermitian connection on $L^p$ induced by $\nabla^{L}$. Consider the induced Bochner Laplacian $\Delta^{L^p}$ acting on $C^\infty(X,L^p)$ by
\begin{equation}\label{e:def-Bochner}
\Delta^{L^p}=\big(\nabla^{L^p}\big)^{\!*}\,
\nabla^{L^p},
\end{equation} 
where $$\big(\nabla^{L^p}\big)^{\!*}: {C}^\infty(X,T^*X\otimes L^p)\to
{C}^\infty(X,L^p)$$ is the formal adjoint of  $\nabla^{L^p}$. Let $V\in C^\infty(X)$ be a real-valued function on $X$. We assume that $V$ and its derivatives of any order are uniformly bounded on $X$ in norm induced by $g$. 
We study the Bochner-Schr\"odinger operator $H_p$ acting on $C^\infty(X,L^p)$ by
\[
H_{p}=\frac 1p\Delta^{L^p}+V. 
\] 
The operator $H_p$ is self-adjoint in the Hilbert space $L^2(X,L^p)$ with domain  $H^2(X, L^p)$, the second Sobolev space. We are interested in asymptotic spectral properties of the operator $H_p$ in the semiclassical limit $p\to +\infty$.  

Consider the real-valued closed 2-form $\mathbf B$ (the magnetic field) given by 
\begin{equation}\label{e:def-omega}
\mathbf B=iR^L. 
\end{equation} 
We assume that $\mathbf B$ is non-degenerate. Thus, $X$ is a symplectic manifold. In particular, its dimension is even, $d=2n$, $n\in \NN$. 

For $x\in X$, let $B_x : T_xX\to T_xX$ be the skew-adjoint operator such that 
\[
\mathbf B_x(u,v)=g(B_xu,v), \quad u,v\in T_xX. 
\]
The operator $|B_x|:=(B_x^*B_x)^{1/2} : T_xX\to T_xX$ is a positive self-adjoint operator. We assume that it is uniformly positive on $X$: 
\begin{equation}\label{e:uniform-positive} 
b_0:=\inf_{x\in X}|B_x|>0.
\end{equation}


\begin{remark}
 Assume that the Hermitian line bundle $(L,h^L)$ is trivial. Then we can write $\nabla^L=d-i \mathbf A$ with a real-valued 1-form $\mathbf A$ (the magnetic potential), and we have
\[
R^L=-id\mathbf A,\quad \mathbf B=d\mathbf A. 
\]
We may assume that $p\in \mathbb R$. The operator $H_p$ is given by
\[
H_p=\frac{1}{p}(d-ip\mathbf A)^*(d-ip\mathbf A)+V.
\]
It has the form of the semiclassical magnetic Schr\"odinger operator:
\[
H_p=\frac{1}{\hbar}(i\hbar d+\mathbf A)^*(i\hbar d+\mathbf A)+V, \quad \hbar=\frac{1}{p},\quad p\in \RR. 
\]
\end{remark} 

\begin{remark} 
If $X$ is the linear space $\RR^{2n}$ with coordinates $Z=(Z_1,\ldots,Z_{2n})$, we can write the 1-form $\bf A$ as
\[
{\bf A}= \sum_{j=1}^{2n}A_j(Z)\,dZ_j,
\]
the matrix of the Riemannian metric $g$ as $g(Z)=(g_{j\ell}(Z))_{1\leq j,\ell\leq 2n}$
and its inverse as $$g(Z)^{-1}=(g^{j\ell}(Z))_{1\leq j,\ell\leq 2n}.$$
Denote $|g(Z)|=\det(g(Z))$. Then $\bf B$ is given by 
\[
{\bf B}=\sum_{j<k}B_{jk}\,dZ_j\wedge dZ_k, \quad
B_{jk}=\frac{\partial A_k}{\partial Z_j}-\frac{\partial
A_j}{\partial Z_k}.
\]
Moreover, the operator $H_p$ has the form
\[
H_p=\frac{1}{p}\frac{1}{\sqrt{|g|}}\sum_{1\leq j,\ell\leq 2n}\left(i \frac{\partial}{\partial Z_j}+pA_j\right) \left[\sqrt{|g|} g^{j\ell} \left(i \frac{\partial}{\partial Z_\ell}+pA_\ell\right)\right]+V.
\]
Our assumptions hold, if the matrix $(B_{j\ell}(Z))$ has full rank $2n$ and its eigenvalues are separated from zero uniformly on $Z\in \RR^{2n}$, for any $\alpha \in \ZZ^{2n}_+$ and $1\leq j,\ell\leq 2n$, we have 
\[
\sup_{Z\in \RR^{2n}}|\partial^\alpha g_{j\ell}(Z)|<\infty, \quad \sup_{Z\in \RR^{2n}}|\partial^\alpha B_{j\ell}(Z)|<\infty, 
\]
and the matrix $(g_{j\ell}(Z))$ is positive definite uniformly on $Z\in \RR^{2n}$.

In particular, when $g$ is the standard Euclidean metric in $\mathbb R^{2n}$, the operator $H_p$ takes the form (with $\hbar=1/p$, $p\in \mathbb R$):
\[
H_\hbar=\frac{1}{\hbar}\sum_{j=1}^{2n}\left(i\hbar \frac{\partial}{\partial Z_j}+A_j\right)^2+V(Z).
\]
It can be rewritten as 
 \[
H_\hbar=\sum_{j=1}^{2n}\left(i\hbar^{1/2}\frac{\partial}{\partial Z_j}+\hbar^{-1/2}A_j\right)^2+V(Z).
\]
and treated as the two-parameter magnetic Schr\"odinger operator 
\begin{equation}\label{e:Hmu}
H_{h,\mu}=\sum_{j=1}^{2n}\left(ih\frac{\partial}{\partial Z_j}+\mu A_j\right)^2+V(Z)
\end{equation}
in the asymptotic regime $h\to 0$, $h\mu=1$. 

In the case of a homogeneous magnetic field
\[
{\bf A}= \sum_{k=1}^{n}\left(\frac{a_k}{2}Z_{2k-1}dZ_{2k}-\frac{a_k}{2}Z_{2k}dZ_{2k-1}\right),\quad {\bf B}= \sum_{k=1}^{n} a_kdZ_{2k-1}\wedge dZ_{2k}, 
\]
the operator $H_\hbar$ has the form
\[
H_\hbar=\frac{1}{\hbar}\sum_{k=1}^{n}\left[\left(i\hbar \frac{\partial}{\partial Z_{2k-1}}-\frac{a_k}{2}Z_{2k}\right)^2+\left(i\hbar \frac{\partial}{\partial Z_{2k}}+\frac{a_k}{2}Z_{2k-1}\right)^2\right]+V(Z).
\]
It is unitarily equivalent to the magnetic Schr\"odinger operator with expanding electric potential
\[
\hat H_\hbar=\sum_{k=1}^{n}\left[\left(i\frac{\partial}{\partial Z_{2k-1}}-\frac{a_k}{2}Z_{2k}\right)^2+\left(i\frac{\partial}{\partial Z_{2k}}+\frac{a_k}{2}Z_{2k-1}\right)^2\right]+V(\hbar^{1/2}Z).
\] 
In particular, if $V\equiv 0$, the semiclassical limit for the Dirichlet realization of the operator $H_\hbar$ in a bounded domain of $\mathbb R^{2n}$ is equivalent to the thermodynamic limit of a 2D Fermi gas submitted to a constant external magnetic field (see, for instance, \cite{CFFH} and the references therein).
 \end{remark}
 
%

For an arbitrary $x\in X$, the operator $B_{x}$ is skew-adjoint. Therefore, its eigenvalues have the form $\pm i a_j(x), j=1,\ldots,n,$ with $a_j(x)>0$. By \eqref{e:uniform-positive}, we have 
$$
a_j(x)\geq b_0>0, \quad x\in X, \quad j=1,\ldots,n.
$$   
Set
\[
\Lambda_{\mathbf k}(x)=\sum_{j=1}^n(2k_j+1) a_j(x)+V(x), \quad \mathbf k=(k_1,\cdots,k_n)\in\ZZ_+^n, \quad x\in X.
\]
%
For any $x\in X$, the set
 \[
 \Sigma_{x}:=\left\{\Lambda_{\mathbf k}({x})\,:\, \mathbf k\in\ZZ_+^n \right\}
\]
is the spectrum of the magnetic Schr\"odinger operator in $T_{x}X$ with constant magnetic field $\mathbf B_{x}$ and constant electric potential $V(x)$ (see, for instance, \cite{Ko22} for more details). Therefore, the numbers $\Lambda_{\mathbf k}(x)$ can be naturally called local Landau levels at $x$. Let
\[
\Sigma:=\bigcup_{x\in X}\Sigma_{x}. 
\] 

\begin{theorem}[\cite{Ko22}]\label{t:spectrum}
For any $K>0$, there exists a $c>0$ such that for any $p\in \NN$ the spectrum of $H_{p}$ in the interval  $[0,K]$  is  contained in the $cp^{-1/4}$-neighborhood of $\Sigma$.
\end{theorem}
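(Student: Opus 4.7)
The strategy is contrapositive: I will show that if $\lambda\in[0,K]$ satisfies $\mathrm{dist}(\lambda,\Sigma)\geq cp^{-1/4}$ for sufficiently large $c$, then $\lambda$ lies in the resolvent set of $H_{p}$. This is done by pasting resolvents of local model operators together via a fine partition of unity, producing an approximate inverse for $H_p-\lambda$.

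First I set up the model operators. For each $x_0\in X$, work in geodesic normal coordinates $u$ on $B(x_0,r_X/2)$ and trivialize $L$ by parallel transport along radial geodesics. In this gauge the connection $1$-form satisfies $\mathbf A(u)=\tfrac12\mathbf B_{x_0}(u,\cdot)+O(|u|^2)$, the metric coefficients are Euclidean up to $O(|u|^2)$, and $V(u)=V(x_0)+O(|u|)$. Under the rescaling $u=p^{-1/2}Z$, the operator $H_p$ becomes $\cH_{x_0}+\cR_{x_0,p}$ on $L^2(\RR^{2n})$, where $\cH_{x_0}$ is the magnetic Schr\"odinger operator on $\RR^{2n}$ with constant field $\mathbf B_{x_0}$ and constant potential $V(x_0)$, and $\cR_{x_0,p}$ is a second-order differential operator whose coefficients are $O(p^{-1/2}|Z|)+O(p^{-1}|Z|^2)+\ldots$. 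By diagonalizing $\mathbf B_{x_0}$ and decoupling into harmonic oscillators, one has $\mathrm{spec}(\cH_{x_0})=\Sigma_{x_0}$, so the hypothesis on $\lambda$ yields $\|(\cH_{x_0}-\lambda)^{-1}\|\leq c^{-1}p^{1/4}$ uniformly in $x_0$.

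Next, using bounded geometry, pick a locally finite cover $\{B(x_j,\rho_p)\}$ with $\rho_p=p^{-1/4}$ and an IMS-type partition of unity $\{\chi_j\}$ with $\sum\chi_j^2\equiv 1$ and $|\nabla^{\alpha}\chi_j|\leq C_\alpha p^{|\alpha|/4}$. Choose cutoffs $\widetilde\chi_j\equiv 1$ on $\mathrm{supp}\,\chi_j$, supported in $B(x_j,2\rho_p)$. Letting $U_j$ denote the composition of trivialization, normal coordinates and $p^{1/2}$-rescaling at $x_j$, define the parametrix
\[
R_p=\sum_j \widetilde\chi_j\,U_j^{-1}(\cH_{x_j}-\lambda)^{-1}U_j\,\chi_j.
\]
Then $(H_p-\lambda)R_p=I+K_p$, with two sources of error: (i) commutators $[H_p,\widetilde\chi_j]$, which, thanks to the $1/p$ prefactor in $H_p$ and the size $O(p^{1/4})$ of $\nabla\widetilde\chi_j$, are first-order operators with coefficients of size $O(p^{-3/4})$; and (ii) the coefficient difference $H_p-U_j^{-1}\cH_{x_j}U_j$ on $\mathrm{supp}\,\widetilde\chi_j$, controlled by $\cR_{x_j,p}$ restricted to $|Z|\leq 2p^{1/4}$, of size $O(p^{-1/4})$ as an operator from the domain of $\cH_{x_j}$. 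Composing with $(\cH_{x_j}-\lambda)^{-1}$ of norm $\leq c^{-1}p^{1/4}$ and using the finite overlap of the cover, we obtain $\|K_p\|\leq C/c$. For $c$ large the Neumann series converges, so $H_p-\lambda$ is invertible and $\lambda\notin\mathrm{spec}(H_p)$.

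The main technical obstacle is step (ii): proving that $\cR_{x_j,p}(\cH_{x_j}-\lambda)^{-1}$ has norm $O(p^{-1/4})$ on $\mathrm{supp}\,\widetilde\chi_j$. This requires quantitative harmonic-oscillator regularity for the model, of the form $\|Z\psi\|+\|\partial_Z\psi\|\leq C\|(\cH_{x_j}+i)\psi\|$, to absorb the Taylor-remainder factors of $Z$ and $\partial_Z$; and one must track the $|Z|$ factors against the cutoff radius $2p^{1/4}$ so that each power of $|Z|/\sqrt{p}$ produces at most $p^{-1/4}$. The choice $\rho_p=p^{-1/4}$ is precisely the scale at which Taylor error and resolvent blow-up balance, and this balance is what produces the claimed $p^{-1/4}$ spectral accuracy.
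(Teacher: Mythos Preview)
The present paper does not actually prove Theorem~\ref{t:spectrum}; it is quoted from \cite{Ko22}, so there is no in-paper argument to compare against. I can therefore only comment on the internal soundness of your sketch.

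The parametrix strategy is reasonable in outline, but step~(ii) contains a genuine gap. The regularity you invoke, $\|Z\psi\|+\|\partial_Z\psi\|\leq C\|(\cH_{x_j}+i)\psi\|$, is \emph{false} for the full-rank constant-field model: each Landau level is infinitely degenerate, and the normalized lowest-level states $\psi_m\propto z^m e^{-|z|^2/4}$ have $\|Z\psi_m\|\sim\sqrt m$ while $\|\cH_{x_j}\psi_m\|$ stays bounded. The domain of $\cH_{x_j}$ controls only the \emph{covariant} derivatives $\nabla^{(0)}_k=\partial_{Z_k}-\tfrac{i}{2}B_{k\ell}Z_\ell$, not $Z$ or $\partial_Z$ separately. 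Rewriting $\cR_{x_j,p}$ in terms of $\nabla^{(0)}$, the dominant contribution is the connection cross term $\delta A\cdot\nabla^{(0)}+\nabla^{(0)}\cdot\delta A$ with $\delta A=O(p^{-1/2}|Z|^2)$ (your own Taylor expansion of $\mathbf A$ has a \emph{quadratic} remainder, not linear). On the cutoff support $|Z|\leq 2p^{1/4}$ this gives only $|\delta A|=O(1)$, so $\cR_{x_j,p}$ is $O(1)$, not $O(p^{-1/4})$, as an operator from the graph norm of $\cH_{x_j}$ to $L^2$. Composing with $(\cH_{x_j}-\lambda)^{-1}$, whose $L^2\to\mathrm{Dom}$ norm is of order $c^{-1}p^{1/4}$ when $\lambda\in[0,K]$, yields a contribution to $\|K_p\|$ of order $c^{-1}p^{1/4}$, which diverges. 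No other choice of $\rho_p$ balances (i) and (ii) simultaneously if one uses only domain-to-$L^2$ regularity of the model.

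This obstruction is exactly why the argument in \cite{Ko22} does not proceed via a bare resolvent parametrix: one needs either weighted commutator estimates that trade the polynomial growth of $\cR_{x_0,p}$ in $Z$ against Gaussian off-diagonal decay of the model resolvent kernel, or a construction of approximate spectral projections built from the model eigenfunctions together with Agmon-type localization. Your sketch supplies neither ingredient. (A minor separate point: with $\sum\chi_j^2\equiv1$ your parametrix should carry $\chi_j$ on both sides, otherwise the principal term of $(H_p-\lambda)R_p$ is $\sum_j\chi_j\neq I$.)
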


When $X$ is compact, a stronger result, with $p^{-1/2}$ instead of $p^{-1/4}$ was proved by L. Charles \cite{charles21}. This estimate seems to be optimal. 

For an interval $[a,b]$, let $\mathcal K_{[a,b]}$ be the closed subset of $X$ given by 
\[
\mathcal K_{[a,b]}=\{x\in X \,:\,  \Sigma_x\cap [a,b]\neq\emptyset\}.
\]
In other words, $x\in \mathcal K_{[a,b]}$ iff $\Lambda_{\mathbf k}(x)\in [a,b]$ for some $\mathbf k \in \mathbb Z^n_+$.

\begin{theorem}[\cite{essential},Theorem 1.2]\label{t:ess-spectrum}
Assume that, for an interval $[a,b]\subset \mathbb R$, the set $\mathcal K_{[a,b]}$
is compact. Then there exists an $\epsilon>0$ such that for any $p\in \NN$ the spectrum of $H_{p}$ in $[a+\epsilon p^{-1/4},b-\epsilon p^{-1/4}]$ is discrete.  
\end{theorem}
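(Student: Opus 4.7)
The plan is to show $\sigma_{\mathrm{ess}}(H_p)\cap[a+\epsilon p^{-1/4},b-\epsilon p^{-1/4}]=\emptyset$, which is equivalent to discreteness of the spectrum there. I will use the Weyl-sequence characterization combined with a Persson-type argument: under bounded geometry, any singular Weyl sequence for $\lambda\in\sigma_{\mathrm{ess}}(H_p)$ can, by standard cut-off and diagonal extraction, be replaced by one whose supports escape every compact subset of $X$. Since $\mathcal K_{[a,b]}$ is compact, such a sequence eventually has support in $X\setminus K$ for any prescribed compact $K\supset\mathcal K_{[a,b]}$. Hence it suffices to exhibit $\epsilon>0$, a compact $K\supset\mathcal K_{[a,b]}$ and a threshold $p_0$ such that for every $p\geq p_0$, every $\lambda\in[a+\epsilon p^{-1/4},b-\epsilon p^{-1/4}]$ and every $u\in C_c^\infty(X\setminus K,L^p)$,
\[
\|(H_p-\lambda)u\|\geq\tfrac{\epsilon}{2}\,p^{-1/4}\,\|u\|.
\]
For $p<p_0$ one takes $\epsilon$ large enough that the interval is empty.

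The key observation is that the required spectral gap is automatic in the shrinking interval. Indeed, if $x\in X\setminus\mathcal K_{[a,b]}$ then $\Sigma_x\cap[a,b]=\emptyset$, so any $\mu\in\Sigma_x$ satisfies $\mu\leq a$ or $\mu\geq b$; consequently, for $\lambda\in[a+\epsilon p^{-1/4},b-\epsilon p^{-1/4}]$,
\[
\mathrm{dist}(\Sigma_x,\lambda)\geq\epsilon p^{-1/4}.
\]
No uniform enlargement of $\mathcal K_{[a,b]}$ to secure a positive gap is needed; the gap matches the $p^{-1/4}$ resolution already present in Theorem~\ref{t:spectrum}.

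To turn this spectral gap for the local Landau levels into an operator estimate on $H_p$, for each $x_0\in X\setminus K$ I introduce the constant-coefficient model operator $H_p^{(x_0)}$ on $T_{x_0}X\cong\mathbb R^{2n}$ --- the semiclassical magnetic Schr\"odinger operator with constant magnetic field $\mathbf B_{x_0}$, constant potential $V(x_0)$, and parameter $1/p$ --- whose spectrum is exactly $\Sigma_{x_0}$. The techniques underlying Theorem~\ref{t:spectrum} in~\cite{Ko22} yield a quantitative comparison: in normal coordinates and a suitable unitary trivialization of $L^p$ near $x_0$, one has $\|(H_p-H_p^{(x_0)})v\|\leq Cp^{-1/4}\|v\|$ for $v$ of bounded energy supported in a ball of radius $r_p\sim p^{-1/4}$, with a constant $C$ uniform in $x_0$. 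Cover $X\setminus K$ by balls $B(x_\alpha,r_p)$ with uniformly bounded multiplicity and take a subordinate partition of unity $\sum_\alpha\chi_\alpha^2=1$ with $|\nabla\chi_\alpha|\lesssim p^{1/4}$. The IMS identity
\[
H_p-\lambda=\sum_\alpha\chi_\alpha(H_p-\lambda)\chi_\alpha-\tfrac{1}{p}\sum_\alpha|\nabla\chi_\alpha|^2
\]
reduces the problem to local estimates on $\chi_\alpha u$. For each $\alpha$, self-adjointness of $H_p^{(x_\alpha)}$ and the gap $\mathrm{dist}(\Sigma_{x_\alpha},\lambda)\geq\epsilon p^{-1/4}$ give $\|(H_p^{(x_\alpha)}-\lambda)\chi_\alpha u\|\geq\epsilon p^{-1/4}\|\chi_\alpha u\|$, while the comparison gives $\|(H_p-H_p^{(x_\alpha)})\chi_\alpha u\|\leq Cp^{-1/4}\|\chi_\alpha u\|$. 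Taking $\epsilon>2C$ and absorbing the IMS commutator error of order $p^{-1/2}$ when summing over $\alpha$ yields the required lower bound.

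The principal obstacle is the uniform local model comparison: the Taylor expansions of the metric $g$, of the magnetic potential $\mathbf A$, and of $V$ around $x_0$ must all be controlled with constants independent of $x_0\in X\setminus K$. This is where the bounded geometry of $(X,g)$ and $(L,h^L)$ enters in full strength (uniform positive injectivity radius and uniform bounds on $R^{TX}$, $R^L$ and their derivatives, giving global normal-coordinate expansions). A secondary point is the scale matching: both the IMS scale and the model-approximation scale are tuned to $p^{-1/4}$, so that the IMS commutator error $(1/p)|\nabla\chi_\alpha|^2=O(p^{-1/2})$ and the model error are each at most of order $p^{-1/4}$, which the spectral gap $\epsilon p^{-1/4}$ dominates once $\epsilon$ is chosen large enough.
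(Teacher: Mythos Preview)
The paper does not prove this theorem; it is quoted verbatim from \cite{essential}, Theorem~1.2, so there is no in-paper argument to compare your proposal against.

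Your high-level strategy---Persson-type reduction to sections supported outside a compact neighbourhood of $\mathcal K_{[a,b]}$, followed by comparison with the constant-field model operators $H_p^{(x_0)}$ via a localization---is exactly the philosophy behind \cite{Ko22} and \cite{essential}. One concrete point, however, does not check out as written. On a ball of radius $r_p\sim p^{-1/4}$ the gauge difference $a:=A-A^{(0)}$ satisfies $|a(Z)|\lesssim|Z|^2\lesssim p^{-1/2}$, while $\|\nabla^{L^p}v\|\lesssim\sqrt{p}\,\|v\|$ for $v$ of bounded energy; hence the cross term $a\cdot\nabla^{L^p}$ in $H_p-H_p^{(x_0)}$ contributes an error of order $p^{-1/2}\cdot\sqrt{p}=O(1)$, not $O(p^{-1/4})$. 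The scale that balances the model error against the IMS localization error is $r_p\sim p^{-3/8}$, at which both are $O(p^{-1/4})$. A second, smaller issue is that the IMS identity you write is a quadratic-form identity, which is well suited to proving that $\lambda$ lies \emph{below} the spectrum but does not by itself yield the two-sided resolvent bound $\|(H_p-\lambda)u\|\geq c\,p^{-1/4}\|u\|$ needed for a gap; in the references this is handled by building an approximate resolvent or approximate spectral projector from the local models and showing the remainder is $O(p^{-1/4})$ in operator norm, rather than by a bare IMS lower bound.
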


As in Theorem~\ref{t:spectrum}, the order $p^{-1/4}$ doesn't seem to be optimal and, probably, can be improved.

The main result of the paper is the following theorem. 

\begin{thm}\label{t:trace} Under the assumptions of Theorem \ref{t:ess-spectrum}, there exists a sequence of distributions $f_r \in \mathcal D^\prime(\mathbb R), r=0,1,\ldots$, such that for any $\varphi\in C^\infty_c(\mathbb R)$ supported in $(a,b)$, we have an asymptotic expansion
\begin{equation}\label{e:trace}
\operatorname{tr} \varphi(H_{p})\sim p^{n}\sum_{r=0}^{\infty}\langle f_{r}, \varphi\rangle p^{-\frac{r}{2}}, \quad p \to \infty.
\end{equation}
\end{thm}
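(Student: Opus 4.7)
The plan is to localize $\varphi(H_p)$ near the compact set $\mathcal{K}_{[a,b]}$ and then extract the expansion by semiclassical rescaling around a family of model magnetic Schr\"odinger operators. The starting point is the Helffer-Sj\"ostrand formula
\[
\varphi(H_p) = -\frac{1}{\pi}\int_{\CC}\bar\partial\tilde\varphi(z)\,(z-H_p)^{-1}\,dL(z)
\]
with an almost-analytic extension $\tilde\varphi$ supported in a small complex neighborhood of $\operatorname{supp}\varphi \subset (a,b)$. Combined with Combes-Thomas-type off-diagonal resolvent estimates---whose applicability is guaranteed by the spectral gap provided by Theorem \ref{t:ess-spectrum} and the bounded geometry of $(X,g,L)$---one shows that, for any cutoff $\chi \in C^\infty_c(X)$ equal to $1$ on a fixed compact neighborhood $U$ of $\mathcal{K}_{[a,b]}$,
\[
\operatorname{tr}\varphi(H_p) = \operatorname{tr}\bigl(\chi\,\varphi(H_p)\,\chi\bigr) + O(p^{-\infty}),
\]
reducing the entire computation to a fixed compact piece of $X$.

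Next, I would cover $U$ by finitely many normal coordinate charts trivializing $L$, with a subordinate partition of unity $\{\phi_\alpha\}$ and reference points $x_\alpha$. In each chart the rescaling $u = \sqrt{p}\,Z$ transforms the suitably conjugated $H_p$ into a differential operator on $\RR^{2n}$ admitting a formal Taylor expansion
\[
\mathcal{H}_{p,x_\alpha} = \mathcal{L}^{x_\alpha} + \sum_{r \geq 1} p^{-r/2}\,\mathcal{R}_r^{x_\alpha},
\]
where $\mathcal{L}^{x_\alpha}$ is the model magnetic Schr\"odinger operator on $T_{x_\alpha}X \cong \RR^{2n}$ with constant magnetic field $\mathbf{B}_{x_\alpha}$ and constant potential $V(x_\alpha)$---whose spectrum equals $\Sigma_{x_\alpha}$---and the $\mathcal{R}_r^{x_\alpha}$ are polynomial-coefficient differential operators built from Taylor coefficients of order $r$ of the metric, the connection $1$-form (in a Coulomb-type gauge), and $V$ at $x_\alpha$. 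Iterating the resolvent identity
\[
(z-\mathcal{H}_{p,x_\alpha})^{-1} = (z-\mathcal{L}^{x_\alpha})^{-1} + \sum_{r \geq 1} p^{-r/2}\,(z-\mathcal{L}^{x_\alpha})^{-1}\,\mathcal{R}_r^{x_\alpha}\,(z-\mathcal{H}_{p,x_\alpha})^{-1}
\]
inside the Helffer-Sj\"ostrand formula yields a formal series $\varphi(\mathcal{H}_{p,x_\alpha}) \sim \sum_{r\geq 0} p^{-r/2}\,T_r^{x_\alpha}(\varphi)$, whose coefficients are explicit finite sums of compositions of $(z-\mathcal{L}^{x_\alpha})^{-1}$ with the $\mathcal{R}_{r_j}^{x_\alpha}$ integrated against $\bar\partial\tilde\varphi$.

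Taking diagonal traces in each chart and summing via the partition of unity produces the factor $p^n$ from the Jacobian $du = p^n\,dZ$. Using the explicit diagonal of the kernel of $\varphi(\mathcal{L}^x)$ obtained from the Landau decomposition (Mehler's formula), the leading term becomes
\[
\langle f_0, \varphi\rangle = \frac{1}{(2\pi)^n}\int_X \sum_{\mathbf{k}\in\ZZ_+^n}\varphi(\Lambda_{\mathbf k}(x))\prod_{j=1}^n a_j(x)\,dv_X(x),
\]
which is finite because $\operatorname{supp}\varphi \subset (a,b)$ forces contributions only from $x \in \mathcal{K}_{[a,b]}$ and from finitely many $\mathbf{k}$; the higher $f_r$ are defined analogously from the on-diagonal traces of $T_r^x(\varphi)$, with independence of the choice of charts, trivializations, and partition of unity following from the cyclicity of the trace. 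The principal technical difficulty is to control the remainder in trace norm after truncation at order $N$: because $\mathcal{L}^x$ has only essential spectrum and the $\mathcal{R}_r^x$ have unbounded polynomial coefficients, traceability of each $T_r^x(\varphi)$ and a quantitative $O(p^{n-(N+1)/2})$ remainder bound require Gaussian off-diagonal decay of the kernel of $\varphi(\mathcal{L}^x)$ (again via Mehler) together with commutator estimates between spectral cutoffs and the perturbations $\mathcal{R}_r^x$; uniformity of all these estimates in $x_\alpha \in U$ is guaranteed by the bounded-geometry hypothesis on $(X,g,L)$ and the uniform bounds on $V$ and its derivatives.
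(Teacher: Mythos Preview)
Your strategy---localize to a compact neighborhood of $\mathcal{K}_{[a,b]}$, then expand via rescaling around the model operators $\mathcal{L}^x$---is sound in outline, and the second half is essentially how the pointwise expansion of $K_{\varphi(H_p)}(x_0,x_0)$ is obtained in \cite{Bochner-trace}. The paper, however, organizes the proof differently: it takes that local trace expansion (uniform in $x_0\in X$) as an input from \cite{Bochner-trace}, and the entire effort in Section~\ref{s:proof} is devoted to justifying that one may integrate it over the compact set $\mathcal{K}_{[a,b]}$ with negligible contribution from $X\setminus\mathcal{K}_{[a,b]}$. For this the paper uses two ingredients you do not invoke: (i) the Agmon-type weighted $L^2$ estimate for eigensections from \cite{essential}, namely $\int_X e^{2c\sqrt{p}\,d(x,\mathcal K_{[\alpha,\beta]})}|u_{p,j}(x)|^2\,dv_g(x)\leq C$ uniformly in $j$; and (ii) a rough polynomial bound $N([\alpha,\beta];H_p)\leq Cp^n$ (Proposition~\ref{p:upperbound}), obtained by transplanting the eigensections via a cutoff into the Dirichlet realization on a bounded domain and comparing counting functions.

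Your localization step, as written, has a gap. Standard Combes--Thomas gives off-diagonal decay of $(z-H_p)^{-1}$ with rate governed by $|\operatorname{Im}z|$ and the distance $d(x,y)$; it does \emph{not} by itself imply that the diagonal $K_{\varphi(H_p)}(x_0,x_0)$ is small when $x_0$ lies far from $\mathcal{K}_{[a,b]}$. For $\operatorname{Re}z\in(a,b)$ the global spectral distance $\operatorname{dist}(z,\sigma(H_p))$ may be only $|\operatorname{Im}z|$, since there are genuine eigenvalues in $(a,b)$; the ``spectral gap'' in Theorem~\ref{t:ess-spectrum} is only in the essential spectrum and gives you nothing extra here. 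What is really being used is that $H_p$ has no spectrum near $(a,b)$ \emph{locally} on $X\setminus\mathcal{K}_{[a,b]}$, and converting that into decay of eigensections is precisely the Agmon argument of \cite{essential}, not Combes--Thomas. Moreover, even if you grant a uniform pointwise or $L^2$ bound on each eigensection outside $U$, controlling the trace norm of $(1-\chi)\varphi(H_p)$ over a region of possibly infinite volume still requires summing over all $N_p$ eigenvalues in $(a,b)$, and without an a~priori polynomial bound on $N_p$ the exponential decay of each $u_{p,j}$ cannot be upgraded to an $O(p^{-\infty})$ trace bound. This is exactly why the paper establishes Proposition~\ref{p:upperbound} before closing the argument.
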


The leading coefficient in \eqref{e:trace} is given by 
\begin{equation}\label{e:f0a-2n2}
\langle f_0, \varphi\rangle=\frac{1}{(2\pi)^{n}}\sum_{\mathbf k\in\ZZ_+^n} \int_X \varphi(\Lambda_{\mathbf k}(x_0))d\mu(x_0),
\end{equation}
where $d\mu=\mathbf B^n/n!$ is the Liouville volume form of the symplectic manifold $(X,\mathbf B)$.
We have
\[
d\mu=\left(\prod_{j=1}^n a_j(x_0)\right) dv_g(x_0),
\]
where $dv_g$ denotes the Riemannian volume form on $X$ associated with $g$.

The next coefficients have the form
\begin{equation}\label{e:fra-2n2}
\langle f_{r}, \varphi\rangle=\sum_{\mathbf k\in\ZZ_+^n} \sum_{\ell=0}^{m}\int_X P_{\mathbf k,\ell,r}(x_0) \varphi^{(\ell)}(\Lambda_{\mathbf k}(x_0))d\mu(x_0),\quad r\geq 1,
\end{equation}
where $P_{\mathbf k,\ell,r}$ is polynomially bounded in $\mathbf k$. 

Note that the sums over $\mathbf k$ in \eqref{e:f0a-2n2} and \eqref{e:fra-2n2} have finitely many nonzero terms, because $\varphi$ is compactly supported and $\Lambda_{\mathbf k}(x_0)\to +\infty$ as $\mathbf k\to \infty$ for any $x_0\in X$. 

As a standard corollary (see, for instance, \cite[Corollary 9.7]{DS99}), we derive Weyl law for the counting function of eigenvalues of $H_p$ in $(a,b)$. For any self-adjoint operator $A$ with discrete spectrum on an interval $I\subset \mathbb R$, we will denote by $N(I;A)$ the number of its eigenvalues in $I$, counting   multiplicities. 

Let $\mathcal S$ be the at most countable set of all $\alpha\in (a,b)$ such that 
\[
\lim_{\varepsilon\to 0+}\sum_{\mathbf k\in\ZZ_+^n} \mu(\{x\in X : \Lambda_{\mathbf k}({x}) \in [\alpha-\varepsilon,\alpha+\varepsilon]\})\neq 0. 
\]

\begin{thm}\label{t:Weyl}
Under the assumptions of Theorem \ref{t:ess-spectrum}, for any $[\alpha,\beta]\subset (a,b)$ with $\alpha,\beta\not\in \mathcal S$, we have
\begin{equation}\label{e:Demailly2}
N([\alpha,\beta];H_p)\sim\frac{p^{n}}{(2\pi)^n}\sum_{\mathbf k\in\ZZ_+^n} \mu(\{x\in X : \Lambda_{\mathbf k}({x}) \in [\alpha,\beta] \}), \quad p\to +\infty.
\end{equation}
\end{thm}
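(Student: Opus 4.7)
The plan is to deduce \eqref{e:Demailly2} by sandwiching $N([\alpha,\beta];H_p)$ between traces of smooth cutoffs and applying Theorem~\ref{t:trace}.

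First, I would verify that the right-hand side of \eqref{e:Demailly2} makes sense as a finite quantity. Since $a_j(x)\geq b_0>0$ and $V$ is uniformly bounded, one has $\Lambda_{\mathbf k}(x)\geq (2|\mathbf k|+n)\,b_0-\sup_X|V|$, so only finitely many $\mathbf k\in\ZZ_+^n$ can ever satisfy $\Lambda_{\mathbf k}(x)\in[\alpha,\beta]$ for any $x$. For each such $\mathbf k$, the condition $\Lambda_{\mathbf k}(x)\in[\alpha,\beta]$ forces $x\in\mathcal K_{[a,b]}$, which is compact by hypothesis, so the corresponding $\mu$-measure is finite and the total sum consists of finitely many nonzero terms.

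Next, fix $\varepsilon>0$ small enough that $[\alpha-\varepsilon,\beta+\varepsilon]\subset(a,b)$, and choose test functions $\varphi_\pm\in C^\infty_c((a,b))$ with $0\leq\varphi_-\leq\mathbf 1_{[\alpha,\beta]}\leq\varphi_+\leq 1$, $\varphi_+\equiv 1$ on $[\alpha,\beta]$, $\varphi_-\equiv 1$ on $[\alpha+\varepsilon,\beta-\varepsilon]$, and both supported in $[\alpha-\varepsilon,\beta+\varepsilon]$. Since Theorem~\ref{t:ess-spectrum} guarantees that the spectrum of $H_p$ is discrete in $[\alpha-\varepsilon,\beta+\varepsilon]$ for $p$ large, the spectral theorem gives
\[
\operatorname{tr}\varphi_-(H_p)\;\leq\;N([\alpha,\beta];H_p)\;\leq\;\operatorname{tr}\varphi_+(H_p).
\]
Applying Theorem~\ref{t:trace} to each $\varphi_\pm$ and reading off the leading coefficient from \eqref{e:f0a-2n2} yields
\[
\operatorname{tr}\varphi_\pm(H_p)=\frac{p^n}{(2\pi)^n}\sum_{\mathbf k\in\ZZ_+^n}\int_X\varphi_\pm(\Lambda_{\mathbf k}(x))\,d\mu(x)+O(p^{n-1/2}).
\]

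Letting $\varepsilon\to 0$ and invoking dominated convergence (justified by the effective finiteness of the $\mathbf k$-sum established above), both upper and lower bounds converge to $\frac{p^n}{(2\pi)^n}\sum_{\mathbf k}\mu\{x:\Lambda_{\mathbf k}(x)\in[\alpha,\beta]\}$, modulo the contribution of the level sets $\{\Lambda_{\mathbf k}=\alpha\}$ and $\{\Lambda_{\mathbf k}=\beta\}$. The main subtlety, which I expect to be the only nontrivial step, is handling these boundary level sets: they can carry positive $\mu$-measure for exceptional endpoints. The standard remedy is to note that for each relevant $\mathbf k$ only countably many values of $\alpha$ (resp.\ $\beta$) make the corresponding level set non-null, so for a dense set of endpoints the sandwich is tight and \eqref{e:Demailly2} follows directly; for the remaining exceptional endpoints one approximates by generic nearby values and uses the monotonicity of both $N([\alpha,\beta];H_p)$ and the right-hand side of \eqref{e:Demailly2} in $\alpha,\beta$. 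Everything else is routine.
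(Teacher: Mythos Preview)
The paper does not actually give a proof of Theorem~\ref{t:Weyl}; it simply announces it as a ``standard corollary'' of Theorem~\ref{t:trace}. Your sandwich argument with smooth cutoffs $\varphi_\pm$ and the leading term \eqref{e:f0a-2n2} is precisely the standard deduction the paper has in mind, so the overall approach matches.

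You correctly isolate the only nontrivial point, the possible positive $\mu$-measure of the endpoint level sets, but the remedy you propose does not close the gap. The sandwich yields
\[
\frac{1}{(2\pi)^n}\sum_{\mathbf k}\mu\{\Lambda_{\mathbf k}\in(\alpha,\beta)\}
\;\leq\;\liminf_{p}\frac{N([\alpha,\beta];H_p)}{p^n}
\;\leq\;\limsup_{p}\frac{N([\alpha,\beta];H_p)}{p^n}
\;\leq\;\frac{1}{(2\pi)^n}\sum_{\mathbf k}\mu\{\Lambda_{\mathbf k}\in[\alpha,\beta]\},
\]
and these coincide exactly when $\mu\{\Lambda_{\mathbf k}=\alpha\}=\mu\{\Lambda_{\mathbf k}=\beta\}=0$ for the finitely many relevant $\mathbf k$. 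Approaching an exceptional $\alpha$ by generic $\alpha'<\alpha$ and $\alpha''>\alpha$ and invoking monotonicity only reproduces the same two bounds: from $\alpha'\uparrow\alpha$ one recovers the closed-interval upper bound, and from $\alpha''\downarrow\alpha$ one recovers the open-interval lower bound, since $\mu\{\Lambda_{\mathbf k}\in[\alpha'',\beta]\}\to\mu\{\Lambda_{\mathbf k}\in(\alpha,\beta]\}$. Monotonicity therefore adds nothing. What your argument genuinely establishes is \eqref{e:Demailly2} for all $[\alpha,\beta]\subset(a,b)$ whose endpoints avoid the countable exceptional set; this is the customary reading of such Weyl statements derived from a trace formula, and presumably what the paper intends. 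If you want the literal statement for every $[\alpha,\beta]$, an additional input (e.g.\ a Tauberian argument or a direct analysis of the spectral projector near a flat level) would be required, not monotonicity alone.
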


\begin{ex}\label{ex:b}
Consider the case when $d=2$ and $V\equiv 0$. Then the magnetic two-form $\bf B$ is a volume form on $X$ and therefore can be identified with the function $b\in C^\infty_b(X)$
given by
\[
{\bf B}=b\,dv_g. 
\]
Denote $b_{min}=\inf_{x\in X} b(x)>0$ and $b_{max}=\sup_{x\in X} b(x)$. Then if 
\[
(2k-1)b_{max}<(2k+1)b_{min}
\] 
for some $k=1,2,\ldots,$ then 
$$
\left((2k-1)b_{max}, (2k+1)b_{min}\right)\neq \emptyset\ \text{and}\ \left((2k-1)b_{max}, (2k+1)b_{min}\right)\cap \Sigma= \emptyset.
$$ 
By Theorem~\ref{t:spectrum}, for any $$(\alpha,\beta)\in ((2k-1)b_{max}, (2k+1)b_{min}),$$ there exists $p_0\in \mathbb N$ such that  
\[
(\alpha,\beta)\cap \sigma(H_p)= \emptyset, \quad p>p_0. 
\]
Denote $b_{K,min}=\inf_{x\in X \setminus K} b(x)>0$ and $b_{K,max}=\sup_{x\in X \setminus K} b(x)$ for some compact $K\subset X$. Then if 
\[
(2k-1)b_{K,max}<(2k+1)b_{K,min}
\] 
for some $k=1,2,\ldots,$ then, by Theorem~\ref{t:ess-spectrum}, for any $$(\alpha,\beta)\in ((2k-1)b_{K,max}, (2k+1)b_{K,min}),$$ there exists $p_0\in \mathbb N$ such that  
\[
(\alpha,\beta)\cap \sigma_{ess}(H_p)= \emptyset, \quad p>p_0. 
\]
By Theorem \ref{t:trace}, for any $\varphi\in C^\infty_c(\mathbb R)$ supported in $(\alpha,\beta)$, we have an asymptotic expansion \eqref{e:trace}:
\[
\operatorname{tr} \varphi(H_{p})\sim \langle f_0, \varphi\rangle p+\langle f_1, \varphi\rangle p^{\frac 12}+ \langle f_{2}, \varphi\rangle +\ldots , \quad p \to \infty,
\]
with the leading coefficient given by
\[
\langle f_0, \varphi\rangle=\frac{1}{2\pi}\int_X b(x_0) \varphi((2k+1)b(x_0)) dv_g(x_0).
\]
Finally, the formula \eqref{e:Demailly2} reads as:
\[
N([\alpha,\beta];H_p)\sim \frac{p}{2\pi} \int_{\{x\in X : (2k+1)b(x) \in [\alpha,\beta]\}}b(x) dv_g(x) 
, \quad p\to +\infty.
\]
\end{ex}

\begin{ex}\label{ex:V}
Consider the case when $d=2$, the magnetic field is constant: $b(x)\equiv b, x\in X$, and the electric potential $V$ satisfies
\[
\lim_{x\to \infty}V(x)=0. 
\]
Then the essential spectrum of $H_p$ coincides with  
\[
\Sigma_0=\{(2k+1)b : k\in \mathbb Z_+\}. 
\]

The trace formula \eqref{e:trace} holds for any $\varphi\in C^\infty_c(\mathbb R)$ supported in $\mathbb R\setminus \Sigma_0$ with the leading coefficient given by 
\[
\langle f_0, \varphi\rangle=\frac{b}{2\pi}\sum_{k=0}^\infty \int_X \varphi((2k+1)b+V(x))dv_g(x).
\]

Finally, the formula \eqref{e:Demailly2} reads as:
\begin{multline*}
N([\alpha,\beta];H_p)\sim\frac{pb}{2\pi}\sum_{k=0}^\infty {\rm vol}_g\left(\{x\in X : (2k+1)b+V(x) \in [\alpha,\beta]\}\right) \\
=\frac{pb}{2\pi}\sum_{k=0}^\infty {\rm vol}_g (V^{-1}[\alpha-(2k+1)b,\beta-(2k+1)b])
, \quad p\to +\infty.
\end{multline*}
\end{ex}


When $X$ is a compact Riemannian manifold and, therefore, the spectrum of $H_p$ is discrete, the trace formula \eqref{e:trace} is proved in \cite{Bochner-trace}. When the Hermitian line bundle $(L,h^L)$ is trivial, it is a particular case of the Gutzwiller trace formula for the semiclassical magnetic Schr\"odinger operator and the zero energy level of the classical Hamiltonian, which is critical in this case (see the survey \cite{trace-zero} for more details).

When $X$ is the Euclidean space $\mathbb R^{2n}$ equipped with the standard Riemannian metric, the case described in Example~\ref{ex:V} was studied in \cite{DD14}. The authors prove a complete asymptotic expansion in powers of $h$ and a Weyl-type asymptotic formula with optimal remainder estimate for the counting function of eigenvalues. 

The formula for the leading coefficient in the trace formula is known for a long time (see, for instance, \cite{CdV86,ELS97,LSY94,Sob94,Sob95, Sob98,T87,T97} for some related results). The asymptotic formula for the counting function of the discrete spectrum of the two-dimensional Landau Hamiltonian perturbed by a nonsmooth expanding potential $V\in L^1(\mathbb R^2)\cap L^2(\mathbb R^2)$ was derived in \cite{RS} (see \cite{P09} for another proof). For the Dirichlet realization of the operator $H_\hbar$ in a bounded domain of $\mathbb R^{2n}$, the second coefficient in the trace formula, which describes a contribution of the boundary, was studied in \cite{CFFH}.  
The asymptotic estimates with remainder for the local spectral function of the two-parameter magnetic Schr\"odinger operator $H_{h,\mu}$ given by \eqref{e:Hmu} in the asymptotic regime $h\to 0$, $h\mu=1$ (called the case of intermediate magnetic field) were studied by V. Ivrii (see \cite[Section 13.6]{{I19III}} for two-dimensional case and \cite[Section 19.6]{{I19IV}} for higher dimensional case).

We also mention that G. Raikov \cite{R98} proved a Weyl formula for the counting function of the operator
\[
\mathcal H_p=\frac{1}{p}(d-ip\mathbf A)^*(d-ip\mathbf A)+\frac 1pV=\frac{1}{\hbar}(i\hbar d+\mathbf A)^*(i\hbar d+\mathbf A)+\hbar V
\]
in $\mathbb R^{2n}$ with constant magnetic field of full rank and decreasing electric potential $V$ in a shrinking interval, approaching a fixed Landau level: $(\Lambda_{\mathbf k} + \frac{1}{p}\lambda_1,\Lambda_{\mathbf k} + \frac{1}{p}\lambda_2)$ with some $\mathbf k\in \mathbb Z_+^n$ and $\lambda_1 < \lambda_2$, $\lambda_1\lambda_2 > 0$. M. Dimassi \cite{D01} improved this result with a sharp remainder estimate and gave a complete asymptotic expansion of the trace formula. 

A recent paper \cite{Y20} studies asymptotics of individual  eigenvalues in the gaps of the two dimensional Schr\"odinger operator with strong magnetic field.

 \section{Proof of the main theorem} \label{s:proof}

This section is devoted to the proof of Theorem~\ref{t:trace}. 

For $\varphi\in C^\infty_c(\mathbb R)$, let $K_{\varphi(H_{p})}\in {C}^{\infty}(X\times X)$ denote the Schwartz kernel of the operator $\varphi(H_{p})$ with respect to the Riemannian volume form $dv_g$.  
Here the starting point is the local trace formula proved in \cite[Corollary 1.2]{Bochner-trace}. It states that, for any $\varphi\in C^\infty_c(\mathbb R)$, the following complete asymptotic expansion holds uniformly on $x_0\in X$:
\begin{equation}\label{e:local-trace}
p^{-n}K_{\varphi(H_{p})} (x_0,x_0)\sim \sum_{r=0}^{+\infty}f_{r}(x_0)p^{-\frac{r}{2}},\quad x_0\in X, \quad p\to +\infty,
\end{equation}
with some smooth functions $f_{r},$ $r=0,1,\ldots$, on $X$ which means that, for any $N\in \mathbb N$, there exists $C>0$ such that for any $p\geq 1$ and $x_0\in X$,  
\[
\left|p^{-n}K_{\varphi(H_{p})} (x_0,x_0)-\sum_{r=0}^Nf_{r}(x_0)p^{-\frac{r}{2}}\right|\leq Cp^{-\frac{N+1}{2}}.
\]
The leading coefficient in \eqref{e:local-trace} is given by 
\begin{equation}\label{e:f0}
f_0(x_0)=\frac{1}{(2\pi)^{n}}\left(\prod_{j=1}^n a_j(x_0)\right) \sum_{\mathbf k\in\ZZ_+^n} \varphi(\Lambda_{\mathbf k}(x_0)).
\end{equation}
The next coefficients have the form
\begin{equation}\label{e:fr}
f_{r}(x_0)=\left(\prod_{j=1}^n a_j(x_0)\right) \sum_{\mathbf k\in\ZZ_+^n} \sum_{\ell=0}^{m} P_{\mathbf k,\ell,r}(x_0) \varphi^{(\ell)}(\Lambda_{\mathbf k}(x_0)),\quad r\geq 1,
\end{equation}
where $P_{\mathbf k,\ell,r}$ is polynomially bounded in $\mathbf k$. 

Now suppose that $\varphi$ is supported in $(a,b)$ and $[\alpha,\beta]\subset (a,b)$ is an interval such that ${\rm supp}\,\varphi\subset (\alpha,\beta)$. In particular, it follows that the coefficients $f_r$ are supported in $\mathcal K_{[\alpha,\beta]}$.

For brevity, denote $N_p=N([\alpha,\beta];H_p)$. Let $\lambda_{p,j}, j=1,2,\ldots,N_p,$ be the eigenvalues of $H_p$ in $[\alpha,\beta]$ taking into account the multiplicities and $\{u_{p,j}\in C^\infty(X,L^p), j=1,2,\ldots, N_p\}$ the corresponding orthonormal system of eigensections:
\begin{equation}\label{e:eigen}
H_pu_{p,j} = \lambda_{p,j}u_{p,j}, \quad p\in \mathbb N, \quad j=1,2,\ldots, N_p. 
\end{equation}

By [\cite{essential}, Theorem 1.3], there exist $p_0\in \mathbb N$ and $C, c>0$ such that, for any $p>p_0$ and $j=1,2,\ldots, N_p$, we have  
\begin{equation}\label{e:exp1}
\int_X e^{2c\sqrt{p} d (x,\mathcal K_{[\alpha,\beta]})}|u_{p,j}(x)|^2dv_g(x) \leq C.
\end{equation}

We also need a rough upper estimate for the eigenvalue distribution function $N([\alpha,\beta];H_p)$.

\begin{prop}\label{p:upperbound}
For any $[\alpha,\beta]\subset (a,b)$, there exists $C>0$ such that
\[
N([\alpha,\beta];H_p)\leq Cp^n, \quad p\in \mathbb N.
\]
\end{prop}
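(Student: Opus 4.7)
The plan is to bound $N_p := N([\alpha,\beta];H_p)$ by the trace of $\chi\varphi(H_p)\chi$ for a suitable pair of cutoffs, and then invoke the local trace formula \eqref{e:local-trace} uniformly on $X$. I would first fix a nonnegative $\varphi\in C^\infty_c(\mathbb R)$ with $\varphi\geq 1$ on $[\alpha,\beta]$ and $\operatorname{supp}\varphi\subset(a,b)$, and, using compactness of $\mathcal K_{[\alpha,\beta]}$, a cutoff $\chi\in C^\infty_c(X)$ with $0\leq\chi\leq 1$, $\chi\equiv 1$ on the $\delta$-neighborhood of $\mathcal K_{[\alpha,\beta]}$ and vanishing outside its $2\delta$-neighborhood for some fixed $\delta>0$.

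Second, I would apply \eqref{e:exp1} to show that for $p$ large enough each normalized eigensection $u_{p,j}$ with eigenvalue in $[\alpha,\beta]$ satisfies $\|\chi u_{p,j}\|^2\geq 1/2$, since the mass outside the $\delta$-neighborhood of $\mathcal K_{[\alpha,\beta]}$ is bounded by $Ce^{-2c\sqrt p\,\delta}$. The discreteness of spectrum in $(a,b)$ given by Theorem~\ref{t:ess-spectrum} precludes accumulation of eigenvalues in the closed subinterval $[\alpha,\beta]$, so $N_p<\infty$ and the spectral projector $P_{[\alpha,\beta]}(H_p)$ is of finite rank. Consequently
\[
N_p\leq 2\sum_{j=1}^{N_p}\|\chi u_{p,j}\|^2=2\operatorname{tr}\bigl(\chi P_{[\alpha,\beta]}(H_p)\chi\bigr)\leq 2\operatorname{tr}\bigl(\chi\varphi(H_p)\chi\bigr),
\]
where the last inequality uses the operator inequality $\varphi(H_p)\geq P_{[\alpha,\beta]}(H_p)$, a consequence of $\varphi\geq\mathbf 1_{[\alpha,\beta]}$ and the spectral theorem (conjugation by $\chi$ preserves positivity, and both operators are trace class because the kernels are smooth and compactly supported on the diagonal).

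Third, I would apply the local trace formula \eqref{e:local-trace}, whose uniformity on $X$ gives $K_{\varphi(H_p)}(x,x)\leq Cp^n$ uniformly in $x\in X$. Integrating against the compactly supported $\chi^2$ yields
\[
\operatorname{tr}\bigl(\chi\varphi(H_p)\chi\bigr)=\int_X\chi(x)^2 K_{\varphi(H_p)}(x,x)\,dv_g(x)\leq C'p^n,
\]
and the proposition follows after absorbing finitely many small $p$ into the constant. The only point that requires care is the uniform-in-$x$ boundedness of $f_0$ in \eqref{e:f0}: by \eqref{e:uniform-positive} one has $\Lambda_{\mathbf k}(x)\geq (2|\mathbf k|+n)b_0+\inf V$, so the support condition on $\varphi$ forces only boundedly many (uniformly in $x$) multi-indices $\mathbf k$ to contribute, and the bounded geometry of $L$ gives a uniform bound on $\prod_j a_j(x)$; together these yield $\|f_0\|_\infty<\infty$. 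This uniformity is the only nontrivial ingredient, and it is already baked into the formulation of \eqref{e:local-trace}, so no serious obstacle is expected.
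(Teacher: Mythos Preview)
Your argument is correct, and it takes a genuinely different route from the paper's. The paper cuts off each eigensection $u_{p,j}$ to a compactly supported $v_{p,j}=\chi u_{p,j}$, then spends most of the work showing that the $v_{p,j}$ remain almost orthonormal and that the quadratic form of $H_p$ stays uniformly bounded on their span (this requires commutator estimates $[H_p,\chi]=\mathcal O(p^{-1/2})$ and careful control of all the cross terms $(H_pv_{p,j},v_{p,k})$). It then applies Glazman's lemma to compare $N_p$ with the eigenvalue count $N(\mu;H_{p,D})$ of the Dirichlet realization in a bounded domain $D$, and finally invokes a lemma of Helffer--Mohamed for the $\mathcal O(p^n)$ bound on the latter.

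Your approach bypasses the Dirichlet comparison entirely: once $\|\chi u_{p,j}\|^2\geq\tfrac12$ is secured via \eqref{e:exp1}, the chain $N_p\leq 2\operatorname{tr}(\chi P_{[\alpha,\beta]}(H_p)\chi)\leq 2\operatorname{tr}(\chi\varphi(H_p)\chi)=2\int\chi^2 K_{\varphi(H_p)}(x,x)\,dv_g$ reduces everything to the uniform pointwise bound on $K_{\varphi(H_p)}(x,x)$ already contained in the local trace formula \eqref{e:local-trace}. There is no circularity, since \eqref{e:local-trace} is quoted from \cite{Bochner-trace} and holds for \emph{any} $\varphi\in C^\infty_c(\mathbb R)$. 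Your proof is shorter and more conceptual; the paper's version is more self-contained in that it does not use the local trace formula for the rough bound (keeping that tool in reserve for the precise asymptotics), but it pays for this by importing the Helffer--Mohamed lemma and doing substantially more hands-on estimation. A minor remark: since you only integrate over the compact set $\operatorname{supp}\chi$, continuity of $f_0$ already suffices and the global boundedness argument, while correct, is not actually needed.
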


The proof of Proposition~\ref{p:upperbound} will be given later. First, we complete the proof of Theorem~\ref{t:trace}.

We have
\[
K_{\varphi(H_{p})}(x_0,x_0)=\sum_{j=1}^{N_p} \varphi(\lambda_{p,j})|u_{p,j}(x_0)|^2. 
\]
By \eqref{e:exp1} and Proposition~\ref{p:upperbound}, we infer that
\begin{multline}\label{e:weight}
\int_X e^{2c\sqrt{p} d (x_0,\mathcal K_{[\alpha,\beta]})} K_{\varphi(H_{p})}(x_0,x_0)dv_g(x_0)\\ =\sum_{j=1}^{N_p} \varphi(\lambda_{p,j})\int_X e^{2c\sqrt{p} d (x_0,\mathcal K_{[\alpha,\beta]})} |u_{p,j}(x_0)|^2dv_g(x_0)\\ \leq C\sup_{\lambda\in \mathbb R} |\varphi(\lambda)|N_p\leq C_1p^n. 
\end{multline}

\begin{lem}\label{l:drho}
For any $[\alpha,\beta]\subset [a,b]$, there exists an $r>0$ such that
\[
d (x_0,\mathcal K_{[\alpha,\beta]})>r
\]
for any $x_0\in X\setminus \mathcal K_{[a,b]}$.
\end{lem}

\begin{proof}
Take an arbitrary $x\in \mathcal K_{[\alpha,\beta]}$. By definition, $\Lambda_{\mathbf k^\prime}(x)\in [\alpha,\beta]$ for some $\mathbf k^\prime \in \mathbb Z^n_+$. For any such $\mathbf k^\prime$, we have
\[
\beta\geq \Lambda_{\mathbf k^\prime}(x) = \sum_{j=1}^n (2k^\prime_j+1) a_j(x)+V(x)\geq (2|\mathbf k^\prime|+n)b_0+\inf_{x\in X}V(x),
\]
that provides the upper bound
\[
|\mathbf k^\prime|:=\sum_{j=1}^n k^\prime_j\leq \frac{1}{2b_0}\left(\beta-\inf_{x\in X} V(x)\right)-\frac n2. 
\]
By the mini-max principle and the fact that $B$ and $V$ are in $C^\infty_b$, for any $x\in X$ and $x_0\in X$, we have 
\[
|a_j(x)-a_j(x_0)|\leq |B_x-B_{x_0}|\leq C_1d(x,x_0), \quad j=1,2,\ldots,n. 
\]
\[
|V(x)-V(x_0)| \leq C_2d(x,x_0),
\]
where $C_1>0$ and $C_2>0$ are independent of $x_0$ and $x$. 

Therefore, for any $x\in \mathcal K_{[\alpha,\beta]}$ and $x_0\in X$, we have
\[
|\Lambda_{\mathbf k^\prime}(x)-\Lambda_{\mathbf k^\prime}(x_0)|
\leq \sum_{j=1}^n(2k^\prime_j+1)|a_j(x)-a_j(x_0)|+|V(x)-V(x_0)|\leq Cd(x,x_0), 
\]
where $C>0$ is independent of $x_0$ and $x$. 

If $x_0\not\in \mathcal K_{[a,b]}$ iff $\Lambda_{\mathbf k}(x_0)\not \in [a,b]$ for any $\mathbf k \in \mathbb Z^n_+$. In particular, $\Lambda_{\mathbf k^\prime}(x_0)\not \in [a,b]$. It follows that
\[
|\Lambda_{\mathbf k^\prime}(x)-\Lambda_{\mathbf k^\prime}(x_0)|\geq \min(\alpha-a, b-\beta),
\]
and the claim is immediate. 
\end{proof}

By Lemma \ref{l:drho} and \eqref{e:weight}, we get
\begin{multline*}
\int_{X\setminus \mathcal K_{[a,b]}}K_{\varphi(H_{p})}(x_0,x_0)dv_g(x_0)\\ \leq e^{-2cr\sqrt{p}} \int_{X\setminus \mathcal K_{[a,b]}} e^{2c\sqrt{p} d (x_0,\mathcal K_{[\alpha,\beta]})} K_{\varphi(H_{p})}(x_0,x_0)dv_g(x_0)\leq C_1p^n e^{-2cr\sqrt{p}}.
\end{multline*}
Thus, we see that
\begin{multline*}
p^{-n}\operatorname{tr} \varphi(H_{p})=p^{-n} \int_{X}K_{\varphi(H_{p})}(x_0,x_0)dv_g(x_0)\\ =p^{-n} \int_{\mathcal K_{[a,b]}}K_{\varphi(H_{p})}(x_0,x_0)dv_g(x_0)+p^{-n}\int_{X\setminus \mathcal K_{[a,b]}}K_{\varphi(H_{p})}(x_0,x_0)dv_g(x_0)\\ = p^{-n} \int_{\mathcal K_{[a,b]}}K_{\varphi(H_{p})}(x_0,x_0)dv_g(x_0) + \mathcal O(e^{-2cr\sqrt{p}}).
\end{multline*}
Using the local trace formula \eqref{e:local-trace} and formulae \eqref{e:f0} and \eqref{e:fr}, we complete the proof of Theorem~\ref{t:trace}. 

\begin{proof}[Proof of Proposition~\ref{p:upperbound}]
Recall that $\lambda_{p,j}, j=1,2,\ldots,N_p,$ are the eigenvalues of $H_p$ in $[\alpha,\beta]$ taking into account the multiplicities and $\{u_{p,j}\in C^\infty(X,L^p), j=1,2,\ldots, N_p\}$ is the corresponding orthonormal system of eigensections. Take relatively compact regular domains $D$ and $D_1$ in $X$ such that $\overline{D_1}\subset D$ and $\mathcal K_{[\alpha,\beta]}\subset D_1$. Let $\chi\in C^\infty_c(X)$ satisfy $0\leq \chi\leq 1$, $\chi\equiv 1$ on $D_1$ and ${\rm supp}\,\chi\subset D$. Set
\[
v_{p,j}=\chi u_{p,j}\in C^\infty(X,L^p),\quad p\in \mathbb N, \quad j=1,2,\ldots, N_p. 
\]
It is clear that
\begin{equation}\label{e:vjvj}
\|v_{p,j}\|\leq 1, \quad p\in \mathbb N, \quad j=1,2,\ldots, N_p.
\end{equation}

There exists $r>0$ such that for any $x\in {X\setminus D_1}$ we have $d (x,\mathcal K_{[\alpha,\beta]})>r$. By \eqref{e:exp1}, it follows that 
\begin{multline}\label{e:X-D1}
\int_{X\setminus D_1} |u_{p,j}(x)|^2dv_g(x)\\ \leq e^{-2cr\sqrt{p}} \int_{X\setminus D_1} e^{2c\sqrt{p} d (x,\mathcal K_{[\alpha,\beta]})} |u_{p,j}(x)|^2dv_g(x)\\ \leq C e^{-2cr\sqrt{p}}, \quad p\in \mathbb N, \quad j=1,2,\ldots, N_p.
\end{multline}
Therefore, we obtain
\begin{multline}\label{e:vjvj1}
\|v_{p,j}\|^2\geq \int_{D_1} |u_{p,j}(x)|^2dv_g(x)
=1-\int_{X\setminus D_1} |u_{p,j}(x)|^2dv_g(x)\\ \geq 1-C e^{-2cr\sqrt{p}}, \quad p\in \mathbb N, \quad j=1,2,\ldots, N_p.
\end{multline}

For $j\neq k$, we have
\begin{align*}
(v_{p,j},v_{p,k})& =\int_{X}\chi^2(x) u_{p,j}(x)\overline{u_{p,k}(x)}dv_g(x)
\\
& =\int_{X}(\chi^2(x)-1) u_{p,j}(x)\overline{u_{p,k}(x)}dv_g(x)
\\
& =\int_{X\setminus D_1}(\chi^2(x)-1) u_{p,j}(x)\overline{u_{p,k}(x)}dv_g(x).
\end{align*}
By \eqref{e:X-D1}, we get
\begin{multline}\label{e:vjvk}
|(v_{p,j},v_{p,k})|\\ \leq \left(\int_{X\setminus D_1} |u_{p,j}(x)|^2dv_g(x)\right)^{1/2}\left(\int_{X\setminus D_1} |u_{p,k}(x)|^2dv_g(x)\right)^{1/2}\\ \leq C e^{-2cr\sqrt{p}}, \quad p\in \mathbb N, \quad j=1,2,\ldots, N_p.
\end{multline}

Now we estimate the quadratic form of $H_p$. We can write
\begin{equation}\label{e:Hvjvj0}
(H_pv_{p,j},v_{p,j})=([H_p,\chi] u_{p,j},v_{p,j})+\lambda_{p,j}\|v_{p,j}\|^2.
\end{equation}
It is easy to check that
\begin{equation}\label{e:comm1}
[H_p, \chi]=\frac 1p\left(-2\nabla\chi\cdot\nabla^{L^p} +\Delta\chi\right).
\end{equation}
To find an upper bound for $\|\nabla^{L^p}u_{p,j}\|$, we take an inner product of both parts of \eqref{e:eigen} with $u_{p.j}$:
\[
(H_pu_{p,j},u_{p,j})=\frac 1p\|\nabla^{L^p} u_{p,j}\|^2+(Vu_{p,j},u_{p,j})=\lambda_{p,j}.
\]
Since $\lambda_{p,j}\leq \beta$ and $V$ is bounded on $X$, we obtain
\begin{equation}\label{e:Hvjvj2}
\|\nabla^{L^p} u_{p,j}\|=\left(p(\lambda_{p,j}-(Vu_{p,j},u_{p,j}))\right)^{1/2}\leq C_1\sqrt{p},
\end{equation}
Since $\lambda_{p,j}\leq \beta$ and the functions $\nabla\chi$ and $\Delta\chi$ are bounded on $X$, by \eqref{e:comm1} and \eqref{e:Hvjvj2}, we infer that
\begin{equation}\label{e:comm2}
\|[H_p, \chi]u_{p,j}\|=\frac {C_2}{\sqrt{p}}.
\end{equation}
Using \eqref{e:Hvjvj0}, \eqref{e:comm2}, $\lambda_{p,j}\leq \beta$ and $\|v_{p,j}\|\leq 1$, we infer that 
\begin{equation}\label{e:Hvjvj1}
|(H_pv_{p,j},v_{p,j})|\leq \gamma, \quad p\in \mathbb N, \quad j=1,2,\ldots, N_p,
\end{equation}
with $\gamma>0$ independent of $p$ and $j$. 

For $j\neq k$, since $(H_pu_{p,j},u_{p,k})=0$, we can write
\[
(H_pv_{p,j},v_{p,k})
=-2(H_pv_{p,j},(1-\chi)u_{p,k})-(H_p[(1-\chi)u_{p,j}],(1-\chi)u_{p,k}).
\]
By \eqref{e:exp1}, we have
\begin{equation}\label{e:1-chi}
\|(1-\chi)u_{p,k}\|=\mathcal O(e^{-cr\sqrt{p}}).
\end{equation}
By \eqref{e:comm2}, \eqref{e:1-chi}, $\lambda_{p,j}\leq \beta$, we get
\[
\|H_pv_{p,j}\|\leq \|[H_p,\chi] u_{p,j}\|+\lambda_{p,j}\|v_{p,j}\|\leq C_3
\]
and
\[
\|H_p[(1-\chi)u_{p,j}]\|\leq \|[H_p,\chi] u_{p,j}\|+\lambda_{p,j}\|(1-\chi)u_{p,j}\|\leq \frac{C_4}{\sqrt{p}}.
\]
This shows that 
\begin{equation}\label{e:Hvjvk}
|(H_pv_{p,j},v_{p,k})|\leq C_5e^{-cr\sqrt{p}}, \quad p\in \mathbb N, \quad j,k=1,2,\ldots, N_p,\quad j\neq k.
\end{equation}

Let $\mathcal V_p\subset C^\infty(X,L^p)$ be the linear span of the set $\{v_{p,j}, j=1,2,\ldots, N_p\}$. 
By \eqref{e:vjvj}, \eqref{e:vjvj1} and \eqref{e:vjvk}, the set $\{v_{p,j}, j=1,2,\ldots, N_p\}$ is linearly independent and, therefore, $\dim \mathcal V_p=N_p$ for sufficiently large $p$. 
Take an arbitrary $v\in \mathcal V_p$ of the form
\[
v=\sum_{j=1}^{N_p}c_jv_{p,j}.
\]
By \eqref{e:vjvj1} and \eqref{e:vjvk}, we have
\begin{equation}\label{e:v2}
\|v\|^2=\sum_{j,k=1}^{N_p}c_j\overline{c_k}(v_{p,j},v_{p,k})\geq \sum_{j=1}^{N_p}|c_j|^2(1-C_1e^{-2cr\sqrt{p}})
\end{equation}
with some $C_1>0$ independent of $p$ and $v$.

Next, by \eqref{e:Hvjvj1}and \eqref{e:Hvjvk}, we infer that
\begin{equation}\label{e:Hpvv}
(H_pv,v)= \sum_{j=1}^{N_p}c_j\overline{c_k}(H_pv_{p,j},v_{p,k})\leq \gamma \sum_{j=1}^{N_p}|c_j|^2(1+C_2e^{-2cr\sqrt{p}})
\end{equation}
with some $C_2>0$ independent of $p$ and $v$.

Let $p_1\in \mathbb N$ be such that for any $p>p_1$ we have $C_1e^{-2cr\sqrt{p}}<1/2$ and, therefore, $1-C_1e^{-2cr\sqrt{p}}>1/2$. By \eqref{e:v2} and \eqref{e:Hpvv}, for any $p>p_1$ we obtain
\begin{multline}\label{e:Hpvv1}
(H_pv,v)\leq \gamma \sum_{j=1}^{N_p}|c_j|^2(1+C_2e^{-2cr\sqrt{p}})
\leq \gamma \left(1+\frac{C_2}{2C_1}\right) \sum_{j=1}^{N_p}|c_j|^2\\
 \leq \gamma \left(2+\frac{C_2}{C_1}\right) \sum_{j=1}^{N_p}|c_j|^2(1-C_1e^{-2cr\sqrt{p}}) \leq \mu \|v\|^2
\end{multline}
with $$\mu=\gamma \left(2+\frac{C_2}{C_1}\right).$$ 

Let us consider the Dirichlet realization $H_{p,D}$ of the operator $H_p$ in $D$. It is clear that $\mathcal V_p$ is contained in the domain of $H_{p,D}$. Using \eqref{e:Hpvv} and the variational principle (Glazman's lemma), we conclude that 
\begin{equation}\label{e:NN1}
N_p=N([\alpha,\beta];H_p)\leq N(\mu; H_{p,D}), \quad p>p_1.
\end{equation}

\begin{lem}
For any $\mu\in \mathbb R$, there exist $C>0$ and $p_2\in \mathbb N$ such that
\begin{equation}\label{e:NN2}
N(\mu; H_{p,D})\leq Cp^{-n}, \quad p>p_2.
\end{equation}
\end{lem}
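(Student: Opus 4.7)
The estimate as displayed cannot hold literally. The operator $H_{p,D}$ is a strictly positive self-adjoint elliptic operator on the relatively compact domain $D$, its spectrum is discrete and accumulates only at $+\infty$, and a direct semiclassical volume count forces $N(\mu;H_{p,D})\gtrsim p^{n}$ for any fixed $\mu>\inf V$. The bound that actually closes the proof of Proposition~\ref{p:upperbound} through \eqref{e:NN1} is $N(\mu;H_{p,D})\le Cp^{n}$, so I read the stated exponent $-n$ as a typographical slip and describe a plan for $N(\mu;H_{p,D})\le Cp^{n}$. If one insisted on the bound as printed, it would force $N(\mu;H_{p,D})=0$ for $p$ large, which is incompatible with the spectral theorem for positive elliptic operators on $D$.

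The plan is a standard Dirichlet--Neumann bracketing argument in the semiclassical parameter $\hbar=1/\sqrt p$. Cover $\overline D$ by finitely many bounded-geometry charts in which $L$ is trivialized, so that on each chart $H_p$ takes the form $\frac 1p(id+p\mathbf A_\alpha)^*(id+p\mathbf A_\alpha)+V$ for a smooth local magnetic potential $\mathbf A_\alpha$. Partition $D$ into $O(p^{n})$ cubes $Q_\nu$ of side length $p^{-1/2}$. Since the Neumann realization on a sub-domain is bounded above by the Dirichlet realization on the ambient domain in the sense of quadratic forms, min-max gives
\[
N(\mu;H_{p,D})\le\sum_\nu N(\mu;H_{p,Q_\nu}^{N}).
\]

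On a cube $Q_\nu$ centered at $x_0$, apply the rescaling $x=x_0+p^{-1/2}z$ and adopt the Poincar\'e gauge $\mathbf A_\alpha(x_0)=0$. In the new variables $H_{p,Q_\nu}^{N}$ is unitarily equivalent to a non-semiclassical Neumann magnetic Schr\"odinger operator on the fixed unit cube whose metric, magnetic field and electric potential differ from their frozen values $g(x_0)$, $\mathbf B_{x_0}$, $V(x_0)$ by terms of size $O(p^{-1/2})$ on the unit cube. Comparison with the constant-coefficient model on the unit cube yields a uniform bound $N(\mu;H_{p,Q_\nu}^{N})\le C_0$, with $C_0$ independent of $p$ and of $\nu$, since the Neumann spectrum of a constant-field magnetic Laplacian plus a constant on a unit cube is a sequence tending to $+\infty$ and the constants $|\mathbf B_{x_0}|$, $V(x_0)$ are uniformly bounded by bounded geometry. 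Summing over the $O(p^{n})$ cubes produces the desired bound.

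The main obstacle is the uniform-in-$\nu$ estimate on the rescaled cube: one must verify that the $O(p^{-1/2})$ perturbations of the metric, the gauge potential and the electric potential cannot move arbitrarily many eigenvalues below $\mu$. This is handled by a routine quadratic-form perturbation, enlarging the threshold from $\mu$ to $\mu+1$ in the comparison model for $p$ sufficiently large, which absorbs all lower-order errors into a harmless bounded constant. Once this comparison is in place, the counting bound follows from the elementary fact that the Neumann magnetic Laplacian on a unit cube has only finitely many eigenvalues below any fixed threshold, and the total count is therefore $O(p^{n})$.
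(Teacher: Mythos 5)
Your reading of the statement is correct: the exponent $-n$ in \eqref{e:NN2} is a typographical slip for $+n$ (the bound is combined with \eqref{e:NN1} to give $N_p\le Cp^n$), and your plan proves the corrected bound. Be aware, though, that the paper does not actually prove this lemma: it cites Lemma 4.2 of \cite{HM96}, adding only that the passage to sections of $L^p$ is harmless because the bundle is locally trivial; the proof of that cited lemma is exactly the argument you outline (Dirichlet--Neumann bracketing over $O(p^{n})$ cubes of side $p^{-1/2}$, rescaling, gauge choice and comparison with the frozen constant-coefficient model), so your proposal reconstructs the cited proof rather than taking a different route. The one place your write-up is thinner than it needs to be is the uniformity in $\nu$ of the per-cube bound: ``the Neumann spectrum tends to $+\infty$'' concerns a single fixed operator, whereas you need a bound uniform over the frozen data $(g(x_0),\mathbf B_{x_0},V(x_0))$; this is routine, e.g.\ on the unit cube one has $\|(i\nabla+\mathbf A_0)u\|^2\ge \tfrac12\|\nabla u\|^2-C\|u\|^2$ with $C$ controlled by $\sup_X|\mathbf B|$ in the Poincar\'e gauge, so the magnetic Neumann counting function below $\mu+1$ is dominated by a non-magnetic Neumann counting function below a fixed threshold, which is finite and uniform over the uniformly elliptic frozen metrics supplied by bounded geometry.
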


\begin{proof}
This lemma is essentially Lemma 4.2 in \cite{HM96} with the only difference that we consider operators acting on vector bundles. The proof is almost the same because each vector bundle is locally trivial. 
\end{proof}

Combining \eqref{e:NN1} and \eqref{e:NN2}, we complete the proof of Proposition~\ref{p:upperbound}. 
\end{proof}

\end{document}